\newtheorem{thm}{Theorem}[section]
\newtheorem{mthm}{Theorem}
\newtheorem{lem}[thm]{Lemma}
\newtheorem{cor}[thm]{Corollary}
\theoremstyle{definition}
\theoremstyle{remark}
\newtheorem{remark}[thm]{Remark}
\newtheorem{example}[thm]{Example}
\numberwithin{equation}{section}
\newcounter{substep}
\def\thesubstep{\arabic{substep}}
\newenvironment{substeps}[1]{%
\refstepcounter{substep}\noindent{ (\ref{#1}.\thesubstep)\ }\ }%
{\em}
\newcommand{\R}{{\mathbb R}}
\newcommand{\psd}{{\mathcal P}}
\newcommand{\J}{{\mathcal J}}
\newcommand{\Cont}{{\mathcal C}}
\newcommand{\gtp}{{\mathfrak p}}\newcommand{\gtq}{{\mathfrak q}}
\newcommand{\gtm}{{\mathfrak m}}\newcommand{\gtn}{{\mathfrak n}}
\newcommand{\gta}{{\mathfrak a}}
\newcommand{\gtP}{{\mathfrak P}}
\newcommand{\Bb}{{\EuScript B}}
\newcommand{\Ff}{{\EuScript F}}
\newcommand{\im}{\operatorname{im}}
\newcommand{\qf}{\operatorname{qf}}
\newcommand{\dist}{\operatorname{dist}}
\newcommand{\hgt}{\operatorname{ht}}
\newcommand{\tr}{\operatorname{tr}}
\newcommand{\id}{\operatorname{id}}
\newcommand{\cl}{\operatorname{Cl}}
\newcommand{\diam}{{\text{\tiny$\displaystyle\diamond$}}}
\newcommand{\gtmd}{\operatorname{\gtm^{\diam}\hspace{-1.5mm}}}
\newcommand{\gtnd}{\operatorname{\gtn^{\diam}\hspace{-1.5mm}}}
\newcommand{\x}{{\tt x}}\newcommand{\y}{{\tt y}}
\newcommand{\veps}{\varepsilon}
\newcommand{\ol }{\overline}
\newcommand{\qq}[1]{\langle{#1}\rangle}
\begin{document}

\title[On the Krull dimension of rings of semialgebraic functions]{On the Krull dimension\\ of rings of semialgebraic functions}

\author{Jos\'e F. Fernando}
\author{J.M. Gamboa}
\address{Departamento de \'Algebra, Facultad de Ciencias Matem\'aticas, Universidad Complutense de Madrid, 28040 MADRID (SPAIN)}
\email{josefer@mat.ucm.es, jmgamboa@mat.ucm.es}
\thanks{Authors supported by Spanish GAAR MTM2011-22435}

\date{June 1st, 2013}
\subjclass[2000]{Primary 14P10, 54C30; Secondary 12D15, 13E99}
\keywords{Semialgebraic function, bounded semialgebraic function, $z$-ideal, semialgebraic depth, Krull dimension, local dimension, transcendence degree, real closed ring, real closed field, real closure of a ring.}

\begin{abstract}
Let $R$ be a real closed field and let ${\mathcal S}(M)$ be the ring of (continuous) semialgebraic functions on a semialgebraic set $M\subset R^n$ and let ${\mathcal S}^*(M)$ be its subring of bounded semialgebraic functions. In this work we introduce the concept of \em semialgebraic depth \em of a prime ideal $\gtp$ of ${\mathcal S}(M)$ in order to provide an elementary proof of the finiteness of the Krull dimension of the rings ${\mathcal S}(M)$ and ${\mathcal S}^*(M)$, inspired in the classical way of doing to compute the dimension of a ring of polynomials on a complex algebraic set and without involving the sophisticated machinery of real spectra. We also show that $\dim{\mathcal S}(M)=\dim{\mathcal S}^*(M)=\dim M$ and we prove that in both cases the height of a maximal ideal corresponding to a point $p\in M$ coincides with the local dimension of $M$ at $p$. In case $\gtp$ is a prime \em $z$-ideal \em of ${\mathcal S}(M)$, its semialgebraic depth coincides with the transcendence degree over $R$ of the real closed field $\qf({\mathcal S}(M)/\gtp)$. 
\end{abstract}

\maketitle

\section*{Introduction}\label{s1}

A subset $M\subset\R^n$ is \em semialgebraic \em when it has a description by a finite boolean combination of polynomial equations and inequalities, which we will call a \em semialgebraic \em description. A (continuous) map $f:M\to N$ is \em semialgebraic \em if its graph is a semialgebraic set (in particular $M$ and $N$ are semialgebraic). In case $N=R$, we say that $f:M\to R$ is a \em semialgebraic function\em. The sum and product defined pointwise endow the set ${\mathcal S}(M)$ of semialgebraic functions on $M$ with a natural structure of unital commutative ring. It is obvious that the subset ${\mathcal S}^*(M)$ of bounded semialgebraic functions on $M$ is a real subalgebra of ${\mathcal S}(M)$. For the time being we denote by ${\mathcal S}^{\diam}(M)$, indistinctly, either ${\mathcal S}(M)$ or ${\mathcal S}^*(M)$ in case the involved statements or arguments are valid for both rings simultaneously. For instance, if $p\in M$, we will denote by $\gtmd_p$ the maximal ideal of all functions in ${\mathcal S}^{\diam}(M)$ vanishing at $p$. For each $f\in{\mathcal S}^{\diam}(M)$ and each semialgebraic subset $N\subset M$, we denote $Z_N(f):=\{x\in N:\, f(x)=0\}$. In case $N=M$, we say that $Z(f):=Z_M(f)$ is the \em zeroset \em of $f$.

As it is well-known the rings ${\mathcal S}^{\diam}(M)$ are particular cases of the so-called \em real closed rings \em introduced by Schwartz \cite{s0} in the '80s of the last century. The theory of real closed rings has been deeply developed until now in a fruitful attempt to establish new foundations for semi-algebraic geometry with relevant interconnections with model theory, see the results of Cherlin-Dickmann \cite{cd1,cd2}, Schwartz \cite{s0,s1,s2,s3}, Schwartz with Prestel, Madden and Tressl \cite{ps,sm,scht} and Tressl \cite{t0,t1,t2}. We refer the reader to \cite{s1} for a ring theoretic analysis of the concept of real closed ring. Moreover, this theory, which vastly generalizes the classical techniques concerning the semi-algebraic spaces of Delfs-Knebusch (see \cite{dk2}), provides a powerful machinery to approach problems about certain rings of real valued functions, and contributes to achieve a better understanding of the algebraic properties of such rings and the topological properties of their spectra. We highlight some of them: (1) real closed fields; (2) rings of real-valued continuous functions on Tychonoff spaces; (3) rings of semi-algebraic functions on semi-algebraic subsets of $R^n$; and more generally (4) rings of definable continuous functions on definable sets in o-minimal expansions of fields.

In this work we provide an elementary geometric proof of the fact that the ring ${\mathcal S}^{\diam}(M)$ has finite Krull dimension and we show that $\dim{\mathcal S}^{\diam}(M)$ equals the dimension of $M$. Despite they are neither noetherian nor enjoy primary decomposition results, these rings are closer to polynomial rings than to the classical rings of continuous functions. For instance, the Lebesgue dimension of $R$ is $1$ (see \cite[16F]{gj}) while the Krull dimension of the ring $\Cont( R)$ of real valued continuous functions on $R$ is infinite (see \cite[14I]{gj}). 

Recall that an ideal $\gta$ of ${\mathcal S}(M)$ is a \em $z$-ideal \em if given two semialgebraic functions $f\in\gta$ and $g\in{\mathcal S}(M)$ such that $Z(f)\subset Z(g)$ it holds that $g\in\gta$. Notice that each $z$-ideal is a real ideal. It follows from \cite[2.6.6]{bcr} that if $M$ is locally closed the $z$-ideals coincide with the radical ideals; in particular, all prime ideals are $z$-ideals. Of course, this is not further true if $M$ is not locally closed. 

In the polynomial context over an algebraically closed field $C$, Hilbert's Nullstellensatz assures that the radical ideals of $C[\x]:=C[\x_1,\ldots,\x_n]$ coincide with the zero ideals of subsets of $C^n$. Thus, when handling chains of prime ideals in rings of semialgebraic functions, we are nearer to the polynomial case over an algebraically closed field $C$ than to the one having coefficients in a real closed field $R$, where the longest chains appear only when dealing with the so called \em real prime ideals \em (see \cite[\S4.1]{bcr}). This is why we follow similar guidelines to those involved to prove that the Krull dimension of a ring of polynomial functions on an algebraic set $Z\subset C^n$ coincides with the dimension of $Z$. Namely, the clue is the following: \em if $\gtP_1\subsetneq\gtP_2$ are two prime ideals of $C[\x]$, the dimension of the zeroset of $\gtP_2$ is strictly smaller to that of $\gtP_1$\em; hence, the dimension is the invariant that allows to bound the number of possible jumps in a chain of prime ideals. 

Nevertheless, it is well known that the common zero set $Z(\gtp)$ of the semialgebraic functions in a prime ideal $\gtp$ of ${\mathcal S}(M)$ is either empty or a point; hence, it has no sense to work with its dimension. We substitute it by the \em semialgebraic depth of $\gtp$ \em that we define as:
$$
{\tt d}_M(\gtp):=\min\{\dim(Z(f)):\ f\in\gtp\}.
$$
Of course, in the polynomial case the corresponding semialgebraic depth of a prime ideal equals the dimension of the zeroset of the ideal. Our main results are the following.

\begin{mthm}[Dimension]\label{dimension} 
The Krull dimensions of the rings ${\mathcal S}(M)$ and ${\mathcal S}^*(M)$ coincide with the topological dimension of $M$.
\end{mthm}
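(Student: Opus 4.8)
The plan is to imitate, as far as the semialgebraic setting permits, the classical proof that the ring of polynomial functions on a complex algebraic set $Z$ has Krull dimension $\dim Z$: there one shows that the dimension of the zeroset drops strictly along every strict inclusion of prime ideals, which bounds the length of prime chains, and one then exhibits an explicit chain of that length. Here the semialgebraic depth ${\tt d}_M$ plays the role of ``dimension of the zeroset of the prime''.

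\textbf{Upper bound.} The core assertion is:
$$
\text{if }\gtp\subsetneq\gtq\text{ are prime ideals of }{\mathcal S}^{\diam}(M),\text{ then }{\tt d}_M(\gtq)<{\tt d}_M(\gtp).
$$
Since $0\in\gtp$ gives ${\tt d}_M(\gtp)\le\dim Z(0)=\dim M$, and since in ${\mathcal S}(M)$ a function with empty zeroset is a unit (so ${\tt d}_M(\gtp)\ge0$ for every proper prime), the displayed inequality forces the values ${\tt d}_M(\gtp_0)>{\tt d}_M(\gtp_1)>\cdots$ along a chain $\gtp_0\subsetneq\cdots\subsetneq\gtp_m$ to be $m+1$ distinct integers in $\{0,1,\ldots,\dim M\}$, whence $m\le\dim M$ and $\dim{\mathcal S}(M)\le\dim M$. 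For ${\mathcal S}^*(M)$ one must also dispose of the ideals containing a function with empty zeroset (the free maximal ideals): this is immediate when $M$ is locally closed, since then the primes of ${\mathcal S}^{\diam}(M)$ coincide with the radical, i.e. $z$-, ideals, and in general it needs a small extra reduction. To prove the displayed inequality, choose $f\in\gtp$ with $\dim Z(f)={\tt d}_M(\gtp)=:e$ and $g\in\gtq\setminus\gtp$. The family $\{Z(f)\cap Z(f'):f'\in\gtp\}$ is downward directed (it is stable under $f',f''\mapsto f'^2+f''^2\in\gtp$), consists of semialgebraic sets of dimension exactly $e$ inside $Z(f)$, so the family of their Zariski closures is a downward directed family of $e$-dimensional algebraic subsets of $R^n$; by noetherianity it has a minimum, so after replacing $f$ by a suitable $f_0'^2+f^2\in\gtp$ we may assume $\overline{Z(f)\cap Z(f')}^{\,\zar}=\overline{Z(f)}^{\,\zar}=:V$ for all $f'\in\gtp$. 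One then argues that $V$ has a distinguished top-dimensional irreducible component along which every $Z(f')$ ($f'\in\gtp$) is Zariski dense, whereas $g\notin\gtp$ forces $Z(g)$ to miss an $e$-dimensional slab of it; feeding this into $h:=f^2+g^2\in\gtq$ (so $Z(h)=Z(f)\cap Z(g)$, of dimension $\ge{\tt d}_M(\gtq)$) yields ${\tt d}_M(\gtq)<e$.

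\textbf{Lower bound.} Let $d:=\dim M$. Since $\dim M=d$, $M$ contains a compact semialgebraic set $K$ that is semialgebraically homeomorphic to $[0,1]^d$; being compact, $K$ is closed in $M$, and semialgebraic Tietze extension (truncated so as to preserve bounds) provides a surjection ${\mathcal S}^{\diam}(M)\twoheadrightarrow{\mathcal S}^{\diam}(K)={\mathcal S}([0,1]^d)$. Hence it suffices to produce in ${\mathcal S}([0,1]^d)$ a chain of primes of length $d$, which I do by induction on $d$ (the case $d=0$ amounting to ${\mathcal S}(\{*\})=R$). Restriction to the face $\{\x_1=0\}$ is a surjection ${\mathcal S}([0,1]^d)\twoheadrightarrow{\mathcal S}([0,1]^{d-1})$; pulling back a length-$(d-1)$ chain supplied by the inductive hypothesis gives a chain of that length in ${\mathcal S}([0,1]^d)$ whose bottom term contains $\x_1$. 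Now $\x_1$ is not a zero-divisor in the reduced ring ${\mathcal S}([0,1]^d)$ (its zeroset is nowhere dense), hence belongs to no minimal prime, so that bottom term strictly contains a prime ideal; prepending it yields a chain of length $d$.

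\textbf{Main obstacle.} The crux is the depth inequality for the upper bound, and inside it the claim that a prime ideal of ${\mathcal S}^{\diam}(M)$ ``concentrates'' along a single top-dimensional irreducible component of the Zariski closure of the zeroset of one of its members. The familiar algebraic reflex --- a proper closed subset of an irreducible variety has strictly smaller dimension --- is not directly available, both because semialgebraic sets lack a good irreducible decomposition and because the primes at hand need not be $z$-ideals, so one cannot pass freely between $f$ and other functions with the same zeroset; bridging this gap is where the real work lies. The remaining points (semialgebraic Tietze with bounds, the handling of ${\mathcal S}^*$, and the reduction to locally closed $M$) are routine but must be carried along throughout.
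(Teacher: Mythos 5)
Your lower bound is correct and in fact takes a slicker route than the paper's: instead of exhibiting an explicit prime of ${\mathcal S}([0,1]^n)$ built from indicator simplices of a triangulation, you induct on $d$ using the surjection onto the face $\{\x_1=0\}$ and the fact that in the reduced ring ${\mathcal S}([0,1]^d)$ the non-zero-divisor $\x_1$ lies in no minimal prime. (The paper needs its explicit construction anyway to control transcendence degrees and to anchor the chain at $\gtm_p$ for the local-dimension theorem, but for the Krull dimension inequality $\dim{\mathcal S}^{\diam}(M)\geq\dim M$ your induction suffices.)

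The upper bound, however, has a genuine gap: your core assertion --- that ${\tt d}_M(\gtq)<{\tt d}_M(\gtp)$ for every strict inclusion $\gtp\subsetneq\gtq$ of prime ideals of ${\mathcal S}^{\diam}(M)$ --- is false when $M$ is not locally closed. Take $M:=\{(x,y)\in R^2:\,0<y\leq x\leq1\}\cup\{p=(0,0)\}$ and let $\gtp$ be the set of $f\in{\mathcal S}(M)$ that extend continuously by $0$ to $M\cup((0,\veps]\times\{0\})$ for some $\veps>0$; then $\gtp\subsetneq\gtm_p$ is a strict inclusion of primes with ${\tt d}_M(\gtp)={\tt d}_M(\gtm_p)=0$. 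The strict decrease of semialgebraic depth holds only for prime $z$-ideals, and the step you yourself flag as ``the real work'' --- concentrating an arbitrary prime along a single top-dimensional irreducible component of the Zariski closure of $Z(f)$ --- cannot be carried out in that generality, precisely because for a non-$z$-ideal one may not replace $f$ by a polynomial (or by any other function with the same zeroset) while staying inside the ideal. The missing idea is the reduction that makes every prime a $z$-ideal: present ${\mathcal S}^*(M)$ as the direct limit of the rings ${\mathcal S}(X)$ over the semialgebraic pseudo-compactifications $(X,{\tt j})$ of $M$, and ${\mathcal S}(M)$ as a localization of ${\mathcal S}^*(M)$, so that any chain $\gtp_0\subsetneq\cdots\subsetneq\gtp_r$ in ${\mathcal S}^{\diam}(M)$ contracts, for a suitable closed and bounded $X$ (obtained from the closure of the graph of finitely many functions separating consecutive terms), to a chain of the same length in ${\mathcal S}(X)$. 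There the ambient set is locally closed, all primes are $z$-ideals, and the depth argument bounds $r$ by $\dim X=\dim M$. This reduction also disposes of the free maximal ideals of ${\mathcal S}^*(M)$, which your ``small extra reduction'' leaves unaddressed.
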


\begin{mthm}[Local dimension]\label{localdimension}
Let $p\in M$ and let $\gtmd_p$ be the maximal ideal of ${\mathcal S}^{\diam}(M)$ associated to $p$. Then, $\hgt(\gtmd_p)$ equals the local dimension $d$ of $M$ at $p$. Moreover, there is a chain of prime ideals $\gtp_0\subsetneq\cdots\subsetneq\gtp_d:=\gtmd_p$ such that the transcendence degree over $R$ of the real closed field $\qf({\mathcal S}^{\diam}(M)/\gtp_k)$ equals $d-k$. In case ${\mathcal S}^{\diam}(M)={\mathcal S}(M)$ the ideals $\gtp_k$ can be chosen to be a $z$-ideals.
\end{mthm}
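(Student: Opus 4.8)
The plan is to reduce the statement to a local computation near $p$ and then build an explicit chain of prime ideals by transporting a chain from a polynomial/Nash model. First I would recall that the local dimension $d$ of $M$ at $p$ means that $p$ lies in the closure of a $d$-dimensional semialgebraic subset of $M$ and in no closure of a higher-dimensional one; by the structure theory of semialgebraic sets (triangulation, or the local conic structure theorem over a real closed field) one may choose a semialgebraic neighbourhood $U$ of $p$ in $M$ together with a semialgebraic set $N\subset U$, $p\in\overline N$, $\dim N=d$, such that $N$ is (semialgebraically homeomorphic to) an open $d$-simplex with $p$ on a face, or more simply such that there is a proper semialgebraic map $\pi\colon N\to R^d$ which is a finite-to-one surjection onto an open box. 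The restriction homomorphism ${\mathcal S}^{\diam}(M)\to{\mathcal S}^{\diam}(U)$ together with the behaviour of $\gtmd_p$ under restriction (which should have been handled in the earlier sections, e.g.\ that $\hgt\gtmd_p$ is computed locally and only depends on the germ of $M$ at $p$) lets me replace $M$ by $U$ and hence assume $\dim_pM=\dim M=d$.

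Next, for the inequality $\hgt(\gtmd_p)\le d$: this is a consequence of the Dimension Theorem applied to a neighbourhood, since any chain of primes below $\gtmd_p$ sits inside ${\mathcal S}^{\diam}(U)$ whose Krull dimension is $\dim U=d$; so it suffices to produce a chain of length $d$. For the construction of the chain, I would use the $d$-dimensional piece $N$ with $p\in\overline N$. Pull back, via a suitable semialgebraic retraction or via the projection $\pi$, the coordinate functions $\x_1,\dots,\x_d$ on $R^d$ to bounded semialgebraic functions $g_1,\dots,g_d\in{\mathcal S}^{\diam}(M)$ chosen so that $g_i(p)=0$ and so that on $N$ they behave like algebraically independent coordinates. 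Then define $\gtp_k$ to be (the $z$-ideal generated by, in the ${\mathcal S}(M)$ case) the set of $f\in{\mathcal S}^{\diam}(M)$ whose zeroset contains a neighbourhood of $p$ in $\{g_{k+1}=\cdots=g_d=0\}\cap\overline N$; more robustly, let $\gtp_k$ be the kernel of the composite ${\mathcal S}^{\diam}(M)\to{\mathcal S}^{\diam}(L_k)$ where $L_k$ is the germ at $p$ of a $k$-dimensional semialgebraic subset of $\overline N$ on which $g_1,\dots,g_k$ restrict to algebraically independent functions. These kernels are prime (as kernels of maps to domains — one must check ${\mathcal S}^{\diam}(L_k)$ embeds into a domain, e.g.\ into the real closure of $R(\x_1,\dots,\x_k)$), they are strictly increasing because $\dim L_k$ strictly decreases, $\gtp_d=\gtmd_p$, and by construction $\qf({\mathcal S}^{\diam}(M)/\gtp_k)$ contains the field generated by the images of $g_1,\dots,g_k$, which are algebraically independent over $R$, giving transcendence degree $\ge k$; the reverse inequality $\le k$ for the piece $\qf(\cdot/\gtp_k)$ — giving exactly $d-k$ after reindexing — follows from the general fact (the $z$-ideal statement in the abstract) that the semialgebraic depth of a $z$-ideal equals this transcendence degree together with $\dim L_k=k$.

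The $z$-ideal refinement in the ${\mathcal S}(M)$ case: here I would check that each $\gtp_k$ as defined via a restriction kernel is automatically a $z$-ideal — if $Z(f)\subset Z(h)$ and $f$ vanishes on the germ $L_k$, then so does $h$ since $L_k\subset Z(f)$ near $p$ forces $L_k\subset Z(h)$ near $p$ — so no extra work is needed beyond arranging the $L_k$ to be genuine germs of semialgebraic subsets (not merely zero-loci of the $g_i$, which in ${\mathcal S}(M)$ could be larger than a semialgebraic set). The main obstacle I anticipate is the simultaneous control of transcendence degree: one must choose the nested family $L_d=\overline N\cap U\supset L_{d-1}\supset\cdots\supset L_0=\{p\}$ and the functions $g_i$ so that at every level the restrictions $g_1|_{L_k},\dots,g_k|_{L_k}$ remain algebraically independent over $R$ while $g_{k+1}|_{L_k}=\cdots=g_d|_{L_k}=0$; this is essentially a Noether-normalization-type argument performed on the germ at $p$, and making it work uniformly — rather than choosing coordinates separately at each stage — is where the technical care goes. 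Once the chain and the independence are in place, everything else (primality, strict inclusions, identification of the top with $\gtmd_p$, the transcendence-degree count, and the $z$-ideal property) is formal.
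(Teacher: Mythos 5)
Your overall skeleton is the right one --- find a $d$-dimensional cube-like piece of $M$ adherent to $p$, build a chain of length $d$ there and pull it back along a surjective restriction homomorphism, and bound the height from above by localizing near $p$ --- but the central step, the construction of the prime ideals $\gtp_k$, does not work as you describe. You propose to take $\gtp_k$ to be the kernel of restriction to a $k$-dimensional semialgebraic set $L_k$ (or to its germ at $p$) and to argue primality by embedding ${\mathcal S}^{\diam}(L_k)$ into a domain such as the real closure of $R(\x_1,\dots,\x_k)$. This is impossible once $k\geq 2$ (and already for $k=1$ in the non-germ version): rings of semialgebraic functions, and of semialgebraic function germs, in positive dimension have zero divisors --- for instance $f:=\max(\x_1-\x_2,0)$ and $g:=\max(\x_2-\x_1,0)$ satisfy $fg=0$ on $[0,1]^2$ while neither germ at the origin is zero --- and a ring with zero divisors embeds in no domain. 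The kernel of such a restriction is a radical $z$-ideal, not a prime one, so your chain is not a chain of primes, and no choice of the $g_i$ or of the nested $L_k$ repairs this. Producing a prime ideal contained in $\gtm_p$ whose semialgebraic depth is $k\geq 1$ is precisely the nontrivial content of the theorem, and the paper does it by a genuinely different device (Example \ref{excrucial}): on $[0,1]^n$ one takes the set of $f$ such that \emph{every} semialgebraic triangulation compatible with $Z(f)$ admits an $n$-dimensional ``indicator simplex'' contained in $Z(f)$ carrying a prescribed nested chain of faces inside the coordinate subspaces; additivity and primality are checked by showing the indicator simplex is uniquely determined. These ideals are then pulled back along the Delfs--Knebusch surjections ${\mathcal S}([0,1]^n)\to{\mathcal S}([0,1]^k\times\{0\})$ and finally along ${\mathcal S}^{\diam}(M)\to{\mathcal S}(T)$ with $T\cong[0,1]^d$ given by Lemma \ref{subset}, which also yields the $z$-ideal property and, via Theorem \ref{rmdc}(ii), the transcendence degrees $d-k$.

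A secondary gap: your upper bound $\hgt(\gtmd_p)\leq d$ is asserted rather than proved. It is not automatic that a chain of primes below $\gtmd_p$ ``sits inside ${\mathcal S}^{\diam}(U)$'' for a neighbourhood $U$ of $p$; one must show that the kernel of the restriction to $U$ is contained in the bottom prime of the chain, and one must also contend with the fact that $M$ need not be locally closed, so the primes of ${\mathcal S}(M)$ need not be $z$-ideals and Lemma \ref{ht0} does not apply directly. The paper resolves both points at once: it pushes the chain into a pseudo-compactification $X$ via (\ref{cita}.\ref{longitud}) (where every prime is a $z$-ideal, so Lemma \ref{ht0} gives $\ell\leq{\tt d}_X(\gtP_0')$), and then factors $0=f_1f_2$ with $Z_{R^n}(f_1)=\cl(\Bb)$, $Z_{R^n}(f_2)=R^n\setminus\Bb$ and $f_2(p)\neq 0$, forcing $f_1|_X\in\gtP_0'$ and hence ${\tt d}_X(\gtP_0')\leq\dim(X\cap\cl(\Bb))=d$. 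This part of your sketch is fixable along the lines you hint at, but it does require the pseudo-compactification detour rather than a direct appeal to the Dimension Theorem on a neighbourhood.
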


Recall that Carral and Coste proved in \cite{cc} the equality $\dim{\mathcal S}(M)=\dim M$ for a locally closed semialgebraic set $M$ (see also \cite{g,s2,s4}) by proving that the real spectrum of ${\mathcal S}(M)$ is homeomorphic to the subset $\widetilde{M}$ of the real spectrum of the ring ${\mathcal P}(M)$ of polynomial functions on $M$ (see \cite[Ch.7]{bcr} for the technicalities concerning the real spectrum). Later on Gamboa-Ruiz extended this equality in \cite{gr} to an arbitrary semialgebraic set, using strong properties of the real spectrum of excellent rings and some crucial results of the theory of real closed rings (see \cite{s2}). As far as we know, the equality $\dim{\mathcal S}^*(M)=\dim M$ was unknown until now and it was also unknown the characterization of the local dimension of a semialgebraic set at one of its points $p$ in terms of the maximal chains of ideals contained in the maximal ideal $\gtmd_p$. 

On the other hand, in the algebraic case it holds that the transcendence degree of the quotient field of the ring of polynomial functions on an irreducible algebraic set $Z$ coincides with the dimension of $Z$ (see \cite[11.25]{am}). In the semialgebraic setting, we prove, by using crucially that ${\mathcal S}(R^n)$ is the real closure of the ring $R[\x]:=R[\x_1,\ldots,\x_n]$, the following analogous result.

\begin{mthm}\label{rmdc}
Let $\gtp\subset{\mathcal S}^{\diam}(M)$ be a prime ideal. Then, 
\begin{itemize}
\item[(i)] The transcendence degree over $R$ of the real closed field $\qf({\mathcal S}^{\diam}(M)/\gtp)$ is finite and upperly bounded by $\dim M$.
\item[(ii)] If $\gtp$ is moreover a prime $z$-ideal, ${\tt d}_M(\gtp)=\tr\deg_R(\qf({\mathcal S}(M)/\gtp))$.
\end{itemize}
\end{mthm}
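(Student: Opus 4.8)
The plan is to reduce everything to the case $M=R^n$ via a semialgebraic embedding, and then exploit the fact that ${\mathcal S}(R^n)$ is the real closure of $R[\x]$. First I would handle part (i). Let $\gtp\subset{\mathcal S}^{\diam}(M)$ be prime and put $F:={\mathcal S}^{\diam}(M)/\gtp$, a domain whose quotient field $K:=\qf(F)$ is a real closed field (this being one of the recurring facts about these rings noted in the excerpt). Choose a proper semialgebraic description realizing $M$ as a bounded semialgebraic subset of $R^n$ with $n$ as small as possible, so that the coordinate functions $\x_1,\ldots,\x_n$ restricted to $M$ lie in ${\mathcal S}^*(M)\subseteq{\mathcal S}^{\diam}(M)$. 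Their images $\ol{\x}_1,\ldots,\ol{\x}_n$ in $F$ generate a subring isomorphic to a quotient of $R[\x]$, hence a finitely generated $R$-algebra, whose quotient field $L\subseteq K$ has $\tr\deg_R(L)\le n$. The key point is then that $K$ is algebraic over $L$: every $f\in{\mathcal S}^{\diam}(M)$, being a semialgebraic function, satisfies a nontrivial polynomial relation $\sum_i a_i(\x)f^i=0$ on $M$ with $a_i\in R[\x]$ — this is exactly the statement that ${\mathcal S}(R^n)$ (and a fortiori ${\mathcal S}(M)$, via restriction) is integral-like over the polynomials, a consequence of ${\mathcal S}(R^n)$ being the real closure of $R[\x]$. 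Reducing this relation mod $\gtp$ shows $\ol f$ is algebraic over $L$, so $\tr\deg_R(K)=\tr\deg_R(L)\le n$. Finally, one replaces the crude bound $n$ by $\dim M$: if $\dim M=d<n$, the set $M$ is contained in a $d$-dimensional algebraic-like stratification, and a Noether-normalization argument (or directly the stratification of $M$ into cells of dimension $\le d$) shows that already $d$ of the coordinate functions suffice to make $K$ algebraic over the subfield they generate; alternatively invoke that $\dim{\mathcal S}^{\diam}(M)=\dim M$ from Theorem~\ref{dimension} together with the inequality $\tr\deg_R(K)\le\dim{\mathcal S}^{\diam}(M)$, which follows because a transcendence basis of length $e$ produces a chain of primes of length $e$ in ${\mathcal S}^{\diam}(M)$ (pull back a chain in the polynomial subring).

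For part (ii), assume $\gtp$ is moreover a prime $z$-ideal of ${\mathcal S}(M)$; I would prove the two inequalities ${\tt d}_M(\gtp)\ge\tr\deg_R(K)$ and ${\tt d}_M(\gtp)\le\tr\deg_R(K)$ separately. For the first, take $f\in\gtp$ with $\dim Z(f)={\tt d}_M(\gtp)=:e$. Since $f$ vanishes on $Z(f)$ and $\gtp$ is a $z$-ideal, every $g\in{\mathcal S}(M)$ vanishing on $Z(f)$ lies in $\gtp$; hence $F={\mathcal S}(M)/\gtp$ receives a surjection from ${\mathcal S}(Z(f))/(\text{image of }\gtp)$, and in particular $K=\qf(F)$ is a quotient field of a domain arising from semialgebraic functions on the $e$-dimensional set $Z(f)$. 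Applying part (i) to $Z(f)$ in place of $M$ gives $\tr\deg_R(K)\le\dim Z(f)=e={\tt d}_M(\gtp)$. For the reverse inequality I would argue that if $e:=\tr\deg_R(K)$, then there exist semialgebraic functions $h_1,\ldots,h_{e+1}\in{\mathcal S}(M)$, not all in $\gtp$, whose images in $F$ are algebraically dependent; more usefully, pick a transcendence basis $\ol g_1,\ldots,\ol g_e$ of $K$ over $R$ lifting to $g_1,\ldots,g_e\in{\mathcal S}(M)$, and observe that the image of $\gtp$ in $F$ being $(0)$ forces, by the $z$-ideal property and a dimension count of the common zeroset, that some function in $\gtp$ has zeroset of dimension $\le e$. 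Concretely: the subring $R[\ol g_1,\ldots,\ol g_e]\subseteq F$ is a polynomial ring, and any relation expressing algebraic dependence of a further function forces $\gtp$ to contain (after clearing denominators) an element whose zeroset is contained in a semialgebraic set of dimension $e$, because the map $(g_1,\ldots,g_e):M\to R^e$ has $e$-dimensional image and the fibers contribute nothing to transcendence degree. Hence ${\tt d}_M(\gtp)\le e$, completing the chain of (in)equalities.

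The main obstacle I anticipate is the sharpening in part (i) from the bound $\tr\deg_R(K)\le n$ (embedding dimension) down to $\tr\deg_R(K)\le\dim M$, and symmetrically the reverse inequality in part (ii): both require controlling transcendence degree via the geometry of the image of a tuple of semialgebraic functions, i.e. relating $\tr\deg_R R[\ol g_1,\ldots,\ol g_e]$ to $\dim\overline{(g_1,\ldots,g_e)(M)}$. The clean way to do this is to prove, once and for all, the two-sided estimate $\tr\deg_R(\qf({\mathcal S}^{\diam}(M)/\gtp))\le\dim M$ and, for $z$-ideals, its refinement relating it to ${\tt d}_M(\gtp)$, by induction on $\dim M$ using a semialgebraic triangulation or cell decomposition of $M$: on each open cell, which is semialgebraically homeomorphic to an open box $(0,1)^d$, the ring of semialgebraic functions has quotient-field transcendence degree exactly $d$ over $R$ (here the real-closure fact for ${\mathcal S}((0,1)^d)$ over $R[\x_1,\ldots,\x_d]$ is decisive), and a Mayer–Vietoris / gluing argument along lower-dimensional strata, plus the behaviour of $z$-ideals under restriction, propagates the bound to all of $M$. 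The bookkeeping with $z$-ideals under these restriction maps — ensuring the contraction/extension of a $z$-ideal is again a $z$-ideal on the relevant piece — is the delicate technical heart, and is where I expect the proof to spend most of its effort, likely leaning on \cite[2.6.6]{bcr} and the structure theory of real closed rings cited in the introduction.
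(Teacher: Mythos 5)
Your proposal has a genuine gap at its central step. The claim in part (i) that $K:=\qf({\mathcal S}^{\diam}(M)/\gtp)$ is \emph{algebraic} over the subfield $L$ generated by the residues of the coordinate functions is neither justified by your argument nor true in general. First, reducing a relation $\sum_i a_i(\x)f^i=0$ modulo $\gtp$ can produce the trivial identity $0=0$: nothing prevents every $a_i|_M$ from lying in $\gtp$ (e.g.\ whenever the Zariski closure of the graph of $f$ contains the fibre over the "centre" of $\gtp$), so no algebraic dependence of $\ol f$ over $L$ is obtained. Second, and more seriously, the statement itself fails for ${\mathcal S}^*(M)$ (and for ${\mathcal S}(M)$ with $M$ not locally closed): take $M$ the open unit disc, $p\in\partial M$, and a prime $\gtp\subset{\mathcal S}^*(M)$ lying over the maximal ideal of ${\mathcal S}(\cl(M))$ at $p$ but separating the limit values at $p$ of some bounded semialgebraic $f$ with no limit there; then $\gtp\cap R[\x]=(\x_1-p_1,\x_2-p_2)$, so $L=R$, while $\qf({\mathcal S}^*(M)/\gtp)$ has positive transcendence degree. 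This is precisely the phenomenon recorded in Example \ref{ptd}, and it is why the paper's proof must construct a \emph{brimming} pseudo-compactification (Step 2): one has to adjoin to the coordinates finitely many further functions, and the existence of a finite family that already captures the whole residue field is itself the nontrivial point, obtained there by maximizing ${\tt d}_{X_\Ff}(\gtp\cap{\mathcal S}(X_\Ff))$ over all finite families $\Ff$ (the maximum exists because these depths are bounded by $\dim M$). Your fallback routes do not repair this: invoking Theorem \ref{dimension} risks circularity in spirit, and the parenthetical "pull back a chain in the polynomial subring" needs lying-over/going-up for $R[g_1,\ldots,g_e]\subset{\mathcal S}^{\diam}(M)$, which you do not have.

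In part (ii) the inequality $\tr\deg_R(K)\le{\tt d}_M(\gtp)$ via the surjection ${\mathcal S}(M)\to{\mathcal S}(Z(f))$ and the $z$-ideal property is a correct reduction (modulo part (i), and modulo checking that the kernel of restriction is contained in $\gtp$, which the $z$-ideal hypothesis does give). But the reverse inequality ${\tt d}_M(\gtp)\le\tr\deg_R(K)$ is only asserted: "the fibers contribute nothing to transcendence degree" and "a dimension count of the common zeroset" are not arguments. The paper proves this direction in Step 1 by an explicit Zariski-closure argument --- if some $f\in\gtp$ had $\dim Z(f)<\dim Z(\gtp\cap\psd(Y))$, a polynomial equation of the Zariski closure of $Z(f)$ would be forced into $\gtp\cap\psd(Y)$, a contradiction --- and then transfers ${\tt d}_X(\gtp\cap{\mathcal S}(X))={\tt d}_M(\gtp)$ along the brimming compactification using the $z$-ideal hypothesis once more. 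Your closing paragraph proposes yet another, unexecuted strategy (induction on $\dim M$ via triangulation and gluing) and concedes that its technical heart is missing; as it stands the proposal does not contain a proof of either half of the theorem beyond the one reduction noted above.
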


The article is organized as follow. In Section \ref{s2}, we show that ${\mathcal S}^*(M)$ is the direct limit of the rings ${\mathcal S}(X)$ where $X$ runs over the \em semialgebraic pseudo-compactifications of $M$\em. This fact is the key point to prove the finiteness of the dimension of ${\mathcal S}^*(M)$ and its localization ${\mathcal S}(M)$. In Section \ref{s3}, we study some properties of the semialgebraic depth of a prime ideal of ${\mathcal S}(M)$ and we prove Theorem \ref{rmdc}. Finally, Theorems \ref{dimension} and \ref{localdimension} are proved in Section \ref{s4}.

\section{Semialgebraic pseudo-compactifications}\label{s2}

A \em semialgebraic pseudo-compactification of $M$ \em is a pair $(X,{\tt j})$ constituted by a closed and bounded semialgebraic set $X\subset R^n$ and a semialgebraic embedding ${\tt j}:M\hookrightarrow X$ whose image is dense in $X$. Of course, it holds that ${\mathcal S}(X)={\mathcal S}^*(X)$ since the image of a bounded and closed semialgebraic set under a semialgebraic function is again bounded and closed. The embedding ${\tt j}$ induces an $R$-monomorphism $j^*:{\mathcal S}(X)\hookrightarrow{\mathcal S}^{\diam}(M),\ f\mapsto f\circ{\tt j}$ and we will denote $\gta\cap{\mathcal S}(X):={\tt j}^{*,-1}(\gta)$ for every ideal $\gta$ of ${\mathcal S}^{\diam}(M)$.

Sometimes it will be useful to assume that the semialgebraic set $M$ is bounded. Namely, the semialgebraic homeomorphism between the open ball $\Bb_n$ of center $0$ and radius $1$ and $R^n$
$$
h:\Bb_n\to R^n\ x\mapsto\frac{x}{\sqrt{1-\|x\|^2}},
$$
induces an $R$-isomorphism ${\mathcal S}(M)\to {\mathcal S}(h^{-1}(M)),\,f\mapsto f\circ h$. Thus, we may always assume that $M$ is bounded and in particular that the closure $\cl(M)$ of $M$ (in $R^n$) is a semialgebraic pseudo-compactification of $M$.

\subsection{Properties of the semialgebraic pseudo-compactifications.}\label{cita}
The following properties are decisive:

\vspace{2mm}
\begin{substeps}{cita}\label{a}
\em
For each finite family $\Ff:=\{f_1,\ldots,f_r\}\subset{\mathcal S}^*(M)$ there exist a semialgebraic pseudo-compacti\-fication $(X,{\tt j}_{\Ff})$ of $M$ and semialgebraic functions $F_1,\ldots,F_r\in{\mathcal S}(X)$ such that $f_i=F_i\circ{\tt j}_{\Ff}$. 
\em
\end{substeps}

\vspace{2mm}
Indeed, we may assume that $M$ is bounded. Now, consider $X:=\cl({\rm graph}(f_1,\ldots,f_r))$, ${\tt j}_\Ff:M\hookrightarrow X,\ x\mapsto(x,f_1(x),\ldots,f_r(x))$ and $F_i:=\pi_{n+i}|_{X}$, where $\pi_{n+i}:R^{n+r}\to R,\ x:=(x_1,\ldots,x_{n+r})\mapsto x_{n+i}$ for $i=1,\ldots,r$. 

\vspace{2mm}
\begin{substeps}{cita}\label{longitud}
\em
Given a chain of prime ideals $\gtp_0\subsetneq\cdots\subsetneq \gtp_r$ of ${\mathcal S}^*(M)$ there is a semialgebraic compactification $(X,{\tt j})$ of $M$ such that the prime ideals $\gtq_i:=\gtp_i\cap{\mathcal S}(X)$ constitute a chain $\gtq_0\subsetneq\cdots\subsetneq\gtq_r$ in ${\mathcal S}(X)$. 
\em
\end{substeps}

\vspace{2mm}
Indeed, it is enough to pick $f_i\in\gtp_i\setminus \gtp_{i-1}$ for $1\leq i \leq r$ and to consider the semialgebraic pseudo-compactification of $M$ provided by (\ref{cita}.\ref{a}) for the family $\Ff:=\{f_1,\ldots,f_r\}$.

\vspace{2mm}
\begin{substeps}{cita}
Let ${\mathfrak F}_M$ be the collection of all the semialgebraic pseudo-compactifications of $M$. Given $(X_1,{\tt j}_1),(X_2,{\rm j}_2)\in{\mathfrak F}_M$ we say that $(X_1,{\tt j}_1)\preccurlyeq(X_2,{\rm j}_2)$ if and only if there is a (unique) continuous surjective map $\rho:X_2\to X_1$ such that $\rho\circ{\tt j}_2={\tt j}_1$; the uniqueness of $\rho$ follows because $\rho|_M={\tt j}_1\circ({\tt j}_2|_M)^{-1}$ and $M$ is dense in $X_i$. We claim that: \em $({\mathfrak F}_M,\preccurlyeq)$ is a directed set.\em
\end{substeps}

\vspace{2mm}
Indeed, let $(X_1,{\tt j}_1),(X_2,{\tt j}_2)\in{\mathfrak F}_M$ and consider the semialgebraic map 
$$
{\tt j}_3:M\to X_3:=\cl_{R^{n+p}}(({\tt j}_1,{\tt j}_2)(M)),\ x\mapsto({\tt j}_1(x),{\tt j}_2(x));
$$ 
notice that $(X_3,{\tt j}_3)\in{\mathfrak F}_M$ and $(X_1,{\tt j}_1)\preccurlyeq(X_3,{\tt j}_3)$ and $(X_2,{\tt j}_2)\preccurlyeq(X_3,{\tt j}_3)$.

\vspace{2mm}
\begin{substeps}{cita}
We have a collection of rings $\{{\mathcal S}(X)\}_{(X,{\tt j})\in{\mathfrak F}_M}$ and $R$-monomorphisms 
$$
\rho_{X_1,X_2}^*:{\mathcal S}(X_1)\to{\mathcal S}(X_2),\ f\mapsto f\circ\rho
$$ 
for $(X_1,{\tt j}_1)\preccurlyeq(X_2,{\tt j}_2)$ such that 
\begin{itemize}
\item $\rho_{X_1,X_1}^*=\id$, and
\item $\rho_{X_1,X_3}^*=\rho_{X_2,X_3}^*\circ\rho_{X_1,X_2}^*$ if $(X_1,{\tt j}_1)\preccurlyeq(X_2,{\tt j}_2)\preccurlyeq(X_3,{\tt j}_3)$.
\end{itemize}

We conclude that: \em The ring ${\mathcal S}^*(M)$ is the direct limit of the directed system $\qq{{\mathcal S}(X),\rho_{X_1,X_2}^*}$ together with the homomorphisms ${\tt j}^*:{\mathcal S}(X)\hookrightarrow{\mathcal S}^*(M)$, where $(X,{\tt j})\in{\mathfrak F}_M$\em. We will write ${\mathcal S}^*(M)=\displaystyle\lim_{\longrightarrow}{\mathcal S}(X)$. 
\end{substeps}

\vspace{2mm}
\begin{substeps}{cita}\label{inequality}
On the other hand, \em the ring ${\mathcal S}(M)$ is the localization ${\mathcal S}^*(M)_{{\mathcal W}(M)}$ of ${\mathcal S}^*(M)$ at the multiplicative set ${\mathcal W}(M)$ of those functions $f\in{\mathcal S}^*(M)$ such that $Z_M(f)=\varnothing$\em. In particular, \em if $\gtp$ is a prime ideal of ${\mathcal S}^*(M)$ that do not intersect ${\mathcal W}(M)$, then $\qf({\mathcal S}^*(M)/\gtp)=\qf({\mathcal S}(M)/\gtp{\mathcal S}(M))$\em.
\end{substeps}

\vspace{2mm}
This is pretty evident since each function $f\in{\mathcal S}(M)$ is the quotient $f=g/h$, where $g:=f/(1+f^2)\in{\mathcal S}^*(M)$ and $h:=1/(1+f^2)\in{\mathcal W}(M)$.

\vspace{2mm}
\begin{substeps}{cita}\label{inequalities}
We conclude from (\ref{cita}.\ref{longitud}) and (\ref{cita}.\ref{inequality}) that:
\begin{equation*}\label{inequalities0}
\dim{\mathcal S}(M)\leq\dim{\mathcal S}^*(M)\leq\sup_{(X,{\tt j})\in{\mathfrak F}_M}\{\dim{\mathcal S}(X)\}.
\end{equation*}
\end{substeps}

\section{Semialgebraic depth of a prime ideal}\label{s3}

The purpose of this section is to study some properties of the semialgebraic depth of a prime ideal of ${\mathcal S}(M)$ and to prove Theorem \ref{rmdc}. A crucial fact when dealing with the ring of semialgebraic functions on a semialgebraic set $M$ is the fact that every closed semialgebraic subset $Z$ of $M$ is the zeroset $Z(h)$ of a (bounded) semialgebraic function $h$ on $M$; take for instance, $h:=\min\{1,\dist(\cdot,Z)\}\in {\mathcal S}^*(M)$.

\begin{lem}\label{ht0}
Let $\gtp,\gtq$ be two prime $z$-ideals of ${\mathcal S}(M)$ such that $\gtq\subsetneq\gtp$. Then, ${\tt d}_M(\gtp)<{\tt d}_M(\gtq)$.
\end{lem}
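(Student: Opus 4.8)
The plan is to exhibit, given a function $f\in\gtq$ realizing the minimum ${\tt d}_M(\gtq)=\dim(Z(f))$, a function $g\in\gtp$ whose zeroset sits inside $Z(f)$ and has strictly smaller dimension; then ${\tt d}_M(\gtp)\le\dim(Z(g))<\dim(Z(f))={\tt d}_M(\gtq)$. First I would pick $h\in\gtp\setminus\gtq$, which exists since $\gtq\subsetneq\gtp$. The natural candidate is a function that vanishes on $Z(f)\cap Z(h)$; for instance $g:=f^2+h^2\in\gtp$ (or $\min\{1,\dist(\cdot,Z(f)\cap Z(h))\}$, using the remark at the start of Section \ref{s3} and the fact that a $z$-ideal contains every function vanishing on the zeroset of one of its members). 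Since $Z(g)=Z(f)\cap Z(h)\subset Z(f)$, we get ${\tt d}_M(\gtp)\le\dim(Z(f)\cap Z(h))\le\dim(Z(f))={\tt d}_M(\gtq)$, so the only real content is to rule out equality.

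The heart of the argument is therefore: \emph{if $\dim(Z(f)\cap Z(h))=\dim(Z(f))=:d$, then $h\in\gtq$}, which contradicts the choice of $h$. To see this, let $T$ be the union of those connected components (or, more precisely, a $d$-dimensional semialgebraic piece) of $Z(f)$ on which the dimension $d$ is attained; since $\dim(Z(f)\cap Z(h))=d$, the function $h$ vanishes on a $d$-dimensional semialgebraic subset of $Z(f)$. Here is where I expect the main obstacle: I must promote "$h$ vanishes on a $d$-dimensional subset of $Z(f)$" to "$h\in\gtq$". The idea is that $f$ already realizes the minimal semialgebraic depth of $\gtq$, so in a suitable sense $Z(f)$ is "$\gtq$-irreducible of dimension $d$": for any $u\in\gtq$, $\dim(Z(u))\ge d$ and hence $Z(u)$ cannot avoid a $d$-dimensional chunk of $Z(f)$ — in fact one should be able to arrange $Z(u)\cap Z(f)$ to still be $d$-dimensional, and conversely a function vanishing on a $d$-dimensional part of $Z(f)$ should be forced into $\gtq$ by the $z$-ideal property together with the primality of $\gtq$. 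Concretely, I would argue: replacing $f$ by $f$ times a suitable element of $\gtq$ I may assume $Z(f)$ is "pure" for $\gtq$; then if $h$ vanished on a $d$-dimensional part of $Z(f)$ but not everywhere, $Z(f)$ would split as $Z(f)=Z(f_1)\cup Z(f_2)$ with both pieces of dimension $\le d$ and $f_1f_2\in\gtq$ but (after discarding a lower-dimensional part via a further $z$-ideal move) $\dim Z(f_i)<d$ forcing $f_i\notin\gtq$ for the non-trivial piece — contradicting primality unless $h\in\gtq$.

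The cleanest route to make the previous paragraph rigorous is probably to avoid ad hoc decompositions and instead use the $z$-ideal hypothesis on \emph{both} ideals systematically: from $\gtq\subsetneq\gtp$ and both being prime $z$-ideals, and using that $Z(f)$ is closed semialgebraic of dimension $d={\tt d}_M(\gtq)$, I would show that there is $h\in\gtp$ with $Z(f)\cap Z(h)$ of dimension $<d$ — which is exactly what is needed — by taking any $h_0\in\gtp\setminus\gtq$, noting $f h_0$ vanishes on $Z(f)$, and using semialgebraic triangulation / cell decomposition of the $d$-dimensional closed semialgebraic set $Z(f)$ to separate the locus where $h_0$ vanishes: on each $d$-cell of $Z(f)$, either $h_0$ vanishes identically (on that cell's closure, contributing to a function forced into $\gtq$ by the $z$-ideal and primality of $\gtq$, impossible as it would put $h_0$-related data in $\gtq$) or $h_0$ vanishes on a set of dimension $<d$ there. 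Summing over the finitely many $d$-cells yields $\dim(Z(f)\cap Z(h_0))<d$, hence $g:=f^2+h_0^2\in\gtp$ has $\dim Z(g)<d={\tt d}_M(\gtq)$, giving ${\tt d}_M(\gtp)<{\tt d}_M(\gtq)$. The delicate point, which I would treat carefully, is the dichotomy on each $d$-cell: ruling out that $h_0$ vanishes on a full $d$-dimensional sub-cell without being congruent to $0$ modulo $\gtq$ is precisely where the minimality of $d={\tt d}_M(\gtq)$ and the $z$-ideal property of $\gtq$ must be combined.
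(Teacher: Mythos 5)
Your overall strategy --- exhibit an element of $\gtp$ whose zeroset has dimension strictly smaller than ${\tt d}_M(\gtq)$ --- is the right one, and it is also the paper's. But the step you yourself flag as ``the delicate point'' is a genuine gap, and the implication on which your argument rests is in fact false as stated. You claim that if $f\in\gtq$ realizes ${\tt d}_M(\gtq)=:d$ and $h\in\gtp\setminus\gtq$ satisfies $\dim(Z(f)\cap Z(h))=d$, then $h\in\gtq$. This cannot be true for an arbitrary minimizer $f$: the set $Z(f)$ may have several $d$-dimensional pieces, only some of which are ``seen'' by $\gtq$. Concretely, if $f_0\in\gtq$ has $\dim Z(f_0)=d$ and $v\notin\gtp$ has $Z(v)$ a $d$-dimensional set disjoint from $Z(f_0)$, then $f:=f_0v$ still lies in $\gtq$ and still realizes the minimum, while $h:=h_1v$ with $h_1\in\gtp\setminus\gtq$ lies in $\gtp\setminus\gtq$ (by primality of $\gtq$) and satisfies $\dim(Z(f)\cap Z(h))=d$. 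For the same reason the cell-by-cell dichotomy does not close the gap: $h_0$ vanishing identically on the closure of a $d$-cell of $Z(f)$ forces nothing into $\gtq$ unless that particular cell is one $\gtq$ ``lives on'', and your attempted reduction to a ``pure'' $f$ (splitting off sub-cells and invoking primality) either fails to terminate --- closed semialgebraic subsets of a fixed dimension satisfy no descending chain condition --- or lands you back at the same question one cell deeper.

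The paper fills exactly this hole with an algebraic device absent from your proposal. It first replaces its minimizer $g$ by $g^2+P_i^2$, where $P_i$ is an irreducible factor, lying in $\gtq$, of a polynomial equation of the Zariski closure of $Z_M(g)$; the $z$-ideal property puts the product of all the factors into $\gtq$ and primality selects one of them. The Zariski closure $Y$ of the new zeroset is then an irreducible algebraic set of dimension $d$, and irreducibility forces $Y$ to be also the Zariski closure of $Z_M(f')$ for $f':=f^2+g^2\in\gtp\setminus\gtq$. Setting $T:=Y\setminus Z_M(f')$ and letting $h$ cut out $\cl(T)$, one gets $Z_M(P)=Z_M(f'h)$, hence $f'h\in\gtq$ and, since $f'\notin\gtq$, $h\in\gtq$; finally $Z_M({f'}^2+h^2)\subset\cl(T)\setminus T$ has dimension $<\dim T\leq d$ by \cite[2.8.13]{bcr}. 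This passage through an irreducible algebraic set is what makes the ``purity'' you were reaching for precise, and it is the essential content of the lemma that your proposal leaves unproved.
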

\begin{proof}
Suppose, by way of contradiction, that there are two prime $z$-ideals $\gtp,\gtq\subset{\mathcal S}(M)$ such that $\gtq\subsetneq\gtp$ and ${\tt d}_M(\gtp)={\tt d}_M(\gtq)$. Let $g\in\gtq$ be such that ${\tt d}_M(\gtq)=\dim Z_M(g)$. We may assume that the Zariski closure $Y$ of $Z_M(g)$ is irreducible.

\vspace{2mm}
Indeed, for each irreducible component $Y_i$ of $Y$ let $P_i$ be a polynomial such that $Y_i=Z_{R^n}(P_i)$. The product $P:=\prod_iP_i$ satisfies $Z_M(g)\subset Y=Z_{R^n}(P)$ and since $\gtq$ is a $z$-ideal, $P\in \gtq$; hence, there is some index $i$ such that $P_i\in\gtq$. Now, picking $g':=g^2+P_i^2\in\gtq$ from the beginning instead of $g$, we have ${\tt d}_M(\gtq)=\dim Z_M(g')$ and the Zariski closure $Y_i$ of $Z_M(g')$ is irreducible.

\vspace{2mm}
Next, we choose  $f\in\gtp\setminus\gtq$ and define $f':=f^2+g^2$. Clearly, $f'\in\gtp\setminus\gtq$ and
$$
\dim Z_M(f')\leq \dim Z_M(g)={\tt d}_M(\gtq)={\tt d}_M(\gtp)\leq \dim Z_M(f');
$$
hence, $\dim Z_M(f')=\dim Z_M(g)=\dim Y$. Thus, since $Z_M(f')\subset Z_M(g)\subset Y$ and the irreducible algebraic set $Y$ has the same dimension as $Z_M(f')$, we deduce that $Y$ is the Zariski closure of $Z_M(f')$. Denote $T:=Y\setminus Z_M(f')$ and let $h\in{\mathcal S}(R^n)$ be such that $\cl(T)=Z_{R^n}(h)$. As before, we denote by $P$ a polynomial equation of $Y$ and we have
$$
Z_M(P)=Y\cap M=(\cl(T)\cap M)\cup Z_M(f')=Z_M(f'h).
$$
Since $P\in\gtq$ and $\gtq$ is a $z$-ideal, $f'h\in\gtq$. Thus, $\gtq$ being prime and $f'\not\in\gtq$, we deduce that $h\in\gtq\subset\gtp$. Therefore, $h':={f'}^2+h^2\in\gtp$ and so $\dim Z_M(h')\geq {\tt d}_M(\gtp)$. Now,
$$
Z_M(h')=Z_M(f')\cap Z_M(h)\subset Z_M(f')\cap\cl(T)=\cl(T)\setminus T;
$$
hence, using \cite[2.8.13]{bcr}, we deduce that
$$
{\tt d}_M(\gtp)\leq\dim(Z_M(h'))\leq\dim(\cl(T)\setminus T)<\dim T\leq\dim Y={\tt d}_M(\gtp),
$$
a contradiction. We conclude ${\tt d}_M(\gtp)<{\tt d}_M(\gtq)$, as required. 
\end{proof}

\begin{remark}
The previous result \ref{ht0} is false if either $\gtp$ or $\gtq$ is not a $z$-ideal. Indeed, consider the triangle $M:=\{(x,y)\in R^2:\, 0<y\leq x\leq1\}\cup\{p=(0,0)\}$ and the prime ideal of ${\mathcal S}(M)$:
$$
\gtp:=\{f\in{\mathcal S}(M):\, \exists\,\veps>0\,|\, f\text{ extends continuously by $0$ to } M\cup((0,\veps]\times\{0\})\}\subsetneq\gtm_p.
$$
A straightforward computation shows that ${\tt d}_M(\gtp)={\tt d}_M(\gtm_p)=0$.
\end{remark}

\begin{cor}\label{ht}
Assume that $M$ is a $d$-dimensional locally closed semialgebraic set. Then, for every prime ideal $\gtp$ of ${\mathcal S}(M)$ it holds ${\tt d}_M(\gtp)+\hgt(\gtp)\leq d$. In particular, the Krull dimension $\dim{\mathcal S}(M)\leq d$.
\end{cor}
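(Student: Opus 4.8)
The plan is to deduce this directly from Lemma \ref{ht0}, exploiting the feature of locally closed sets that promotes every prime ideal to a $z$-ideal. First I would recall that, since $M$ is locally closed, \cite[2.6.6]{bcr} guarantees that the $z$-ideals of ${\mathcal S}(M)$ are precisely its radical ideals; in particular \emph{every} prime ideal of ${\mathcal S}(M)$ is a $z$-ideal, so Lemma \ref{ht0} applies to any strict inclusion of prime ideals. Next I would record the a priori bounds $0\leq{\tt d}_M(\gtq)\leq d$ valid for every prime ideal $\gtq$ of ${\mathcal S}(M)$: the upper bound is clear because $Z(f)\subset M$ for every $f$, and the lower bound holds because a proper ideal contains no units, so each $f\in\gtq$ has $Z(f)\neq\varnothing$, whence $\dim Z(f)\geq0$.

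Now fix a prime ideal $\gtp$ and let $\gtp=\gtq_0\supsetneq\gtq_1\supsetneq\cdots\supsetneq\gtq_n$ be an arbitrary chain of prime ideals of ${\mathcal S}(M)$ descending from $\gtp$. Applying Lemma \ref{ht0} to each consecutive strict inclusion $\gtq_{i+1}\subsetneq\gtq_i$ yields
$$
{\tt d}_M(\gtp)={\tt d}_M(\gtq_0)<{\tt d}_M(\gtq_1)<\cdots<{\tt d}_M(\gtq_n),
$$
a strictly increasing sequence of integers contained in $\{0,1,\ldots,d\}$ by the previous paragraph. Hence $n\leq{\tt d}_M(\gtq_n)-{\tt d}_M(\gtp)\leq d-{\tt d}_M(\gtp)$. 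Taking the supremum over all such chains gives $\hgt(\gtp)\leq d-{\tt d}_M(\gtp)$, that is, ${\tt d}_M(\gtp)+\hgt(\gtp)\leq d$. Finally, since ${\tt d}_M(\gtp)\geq0$, this forces $\hgt(\gtp)\leq d$ for every prime ideal $\gtp$, and therefore $\dim{\mathcal S}(M)\leq d$.

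As for difficulties, there is essentially no hard step here: everything is bookkeeping on top of Lemma \ref{ht0}. The only point that deserves care is the legitimacy of invoking Lemma \ref{ht0}, which rests on the locally closed hypothesis via \cite[2.6.6]{bcr}; without it a chain of prime ideals need not consist of $z$-ideals and the argument breaks down, exactly as the Remark following Lemma \ref{ht0} illustrates. I would also make explicit that the bound ${\tt d}_M(\gtp)+\hgt(\gtp)\leq d$ is the natural substitute for the polynomial identity $\dim(Z(\gtP))+\hgt(\gtP)=\dim Z$, with the semialgebraic depth playing the role of the dimension of the zeroset of the ideal.
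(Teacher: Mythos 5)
Your proposal is correct and follows essentially the same route as the paper: promote all prime ideals to $z$-ideals via the locally closed hypothesis, apply Lemma \ref{ht0} along a chain to get a strictly decreasing sequence of semialgebraic depths bounded between $0$ and $d$, and read off the height bound. The only difference is cosmetic (you descend from $\gtp$ while the paper ascends to it), and your explicit remark that ${\tt d}_M(\gtq)\geq 0$ because proper ideals contain no units is a detail the paper leaves implicit.
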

\begin{proof}
As $M$ is locally closed, all the prime ideals of ${\mathcal S}(M)$ are $z$-ideals. Note that given a chain of prime ideals $\gtp_0\subsetneq\cdots\subsetneq\gtp_r$ in ${\mathcal S}(M)$, we get, by Lemma \ref{ht0}, ${\tt d}_M(\gtp_r)<\cdots<{\tt d}_M(\gtp_0)\leq d,$ and so, ${\tt d}_M(\gtp_r)\leq d-r$. Whence, $\hgt(\gtp)+{\tt d}_M(\gtp)\leq d$ for every prime ideal $\gtp\subset{\mathcal S}(M)$, as wanted.
\end{proof}
 
Next, we proceed with the proof of Theorem \ref{rmdc}. Recall first that every commutative ring $A$ has a so called \em real closure \em ${\rm rcl}(A)$ and this is unique up to a unique ring homomorphism over $A$. This means that ${\rm rcl}(A)$ is a real closed ring and there is a (not necessarily injective) ring homomorphism $\gamma:A\to{\rm rcl}(A)$ such that for every ring homomorphism $\psi:A\to B$ to some other real closed ring $B$, there is a unique ring homomorphism $\ol{\psi}:{\rm rcl}(A)\to B$ with $\psi=\ol{\psi}\circ\gamma$. For example, the real closure of the polynomial ring $R[\x]:=R[\x_1,\ldots,\x_n]$ is the ring ${\mathcal S}(R^n)$ of semi-algebraic functions on $R^n$. More generally, if $Z\subset R^n$ is an algebraic set, then ${\mathcal S}(Z)$ is the real closure of the ring $\psd(Z):=R[\x]/\J(Z)$ of polynomials on an algebraic set $Z\subset R^n$, where $\J(Z):=\{f\in R[\x]:\ Z\subset Z(f)\}$.

In particular, if $\varphi:\psd(Z)\to F$ is an $R$-homomorphism into a real closed field $F$, there is a unique $R$-homomorphism $\ol{\varphi}:{\mathcal S}(Z)\to F$ such that $\varphi=\ol{\varphi}\circ\gamma$ where $\gamma:\psd(Z)\hookrightarrow{\mathcal S}(Z)$ is the natural inclusion. Unfortunately, this result extends no more to arbitrary semialgebraic sets, even if they are closed.

\begin{example}\label{ptd}
Consider the semialgebraic subsets $M:=\{x^2+y^2<1\}$ and $K:=\{x^2+y^2\leq1\}$ of $R^2$. We have that $R[\x,\y]=\psd(R^2)=\psd(K)=\psd(M)$, hence, the real closure of this ring is ${\mathcal S}(R^2)$ but in the first row of the following diagram
$$
\xymatrix{
{\mathcal S}(R^2)\ar@{->>}[r]&{\mathcal S}(K)\ar@{^(->}[r]&{\mathcal S}^*(M)\ar@{^(->}[r]&{\mathcal S}(M)\\
R[\x,\y]\ar@{^(->}[u]\ar@{=}[r]&\psd(K)\ar@{^(->}[u]\ar@{=}[r]&\psd(M)\ar@{^(->}[u]\ar@{^(->}[ru]
}
$$
none of the involved homomorphisms is bijective.\qed 
\end{example}

\begin{proof}[Proof of Theorem \em\ref{rmdc}] 
We develope this proof in three steps:

\vspace{2mm}
\noindent{\bf Step 1.} We approach first the case when $M=X$ is closed and bounded: \em Let $X$ be a closed and bounded semialgebraic set and let $\gtp$ be a prime ideal of ${\mathcal S}(X)$. Then,
$$
{\tt d}_X(\gtp)=\tr\deg_R(\qf({\mathcal S}(X)/\gtp)).
$$
\em 

Indeed, let $Y$ be the Zariski closure of $X$ and write $\gtP:=\gtp\cap\psd(Y)$. Notice that $\gtP$ is a real prime ideal and so the irreducible algebraic set $Z:=Z(\gtP)\subset Y$ satisfies $\psd(Z)\equiv\psd(Y)/\gtP$.

Since $\tr\deg_R(\qf(\psd(Z)))=\dim Z$, it is enough to check that ${\tt d}_{X}(\gtp)=\dim Z$ and that the field extension $\qf(\psd(Z))=\qf(\psd(Y)/\gtP)\hookrightarrow\qf({\mathcal S}(X)/\gtp)$ is algebraic. For the last assertion it suffices that $\qf({\mathcal S}(X)/\gtp)$ is the real closure of $\qf(\psd(Z))$ with respect to the ordering induced by $\qf({\mathcal S}(X)/\gtp)$ on $\qf(\psd(Z))$.

Indeed, let us check first that ${\tt d}_{X}(\gtp)=\dim Z$, the inequality ${\tt d}_{X}(\gtp)\leq\dim Z$ being clear. Suppose that there is $f\in\gtp$ such that $\dim(Z(f))<\dim Z$. Then, the Zariski closure $Z'$ of $Z(f)$ has its same dimension. Let $h'\in R[\x]$ be a polynomial equation of $Z'$ and observe that $h'\in\gtp$, because $Z(f)\subset Z(h')$ (use \cite[2.6.6]{bcr}); hence, $h'\in\gtP:=\gtp\cap\psd(Y)$ and so $Z\subset Z'$, which contradicts the fact that $\dim Z'<\dim Z$. 

Next, we prove that $\qf({\mathcal S}(X)/\gtp)$ is the real closure of $\qf(\psd(Z))$ with respect to the ordering induced by $\qf({\mathcal S}(X)/\gtp)$ on $\qf(\psd(Z))$. Consider the diagram
$$
\begin{array}{cccc}
f&\longmapsto&f|_{X\cap Z}\\[4pt]
{\mathcal S}(X)&\overset{\theta_1}{\longrightarrow}&{\mathcal S}(X\cap Z)&g|_{X\cap Z}\\
&&{\text{\footnotesize$\displaystyle\theta_2$}}\uparrow&\uparrow\\
&&{\mathcal S}(Z)&g
\end{array}
$$
Since $X\cap Z$ is closed in $X$ and $Z$, we deduce, by \cite{dk}, that $\theta_1$ and $\theta_2$ are epimorphisms; hence, $\gtq:=\theta_2^{-1}(\theta_1(\gtp))$ is a prime ideal. By the first isomorphism theorem,
\begin{multline*}
{\mathcal S}(X)/\gtp\cong{\mathcal S}(X\cap Z)/\theta_1(\gtp)\cong{\mathcal S}(Z)/\gtq\ \text{ and so }\\ \qf({\mathcal S}(X)/\gtp)\cong\qf({\mathcal S}(X\cap Z)/\theta_1(\gtp))\cong\qf({\mathcal S}(Z)/\gtq).
\end{multline*}
Since $\gtP=\gtp\cap\psd(Y)$ and $\psd(Z)=\psd(Y)/\gtP$, we deduce $\gtq\cap\psd(Z)=\{0\}$ and so $\psd(Z)\hookrightarrow{\mathcal S}(Z)/\gtq$. Thus, it is enough to assure that $\qf({\mathcal S}(Z)/\gtq)$ is the real closure of $\qf(\psd(Z))$ with respect to the ordering induced by $\qf({\mathcal S}(Z)/\gtq)$ on $\qf(\psd(Z))$; but this follows from \cite[\S I.4, p.27]{s2} and we are done.

\vspace{2mm}
\noindent{\bf Step 2.} \em Let $\gtp$ be a prime ideal of ${\mathcal S}^*(M)$. Then, there is a semialgebraic pseudo-compactification $(X,{\tt j})$ of $M$ such that 
$$
\qf({\mathcal S}(X)/(\gtp\cap{\mathcal S}(X)))=\qf({\mathcal S}^*(M)/\gtp).
$$
\em We refer to $(X,{\tt j})$ as a \em brimming semialgebraic pseudo-compactification of $M$ for $\gtp$\em. 

By (\ref{cita}.\ref{inequality}), it is enough to consider the case ${\mathcal S}^*(M)={\mathcal S}^*(M)$ and we assume moreover that $M$ is bounded. Consider the real closed field $F:=\qf({\mathcal S}^*(M)/\gtp)$ and the $R$-homomorphism
$$
\varphi:{\mathcal S}^*(M)\to{\mathcal S}^*(M)/\gtp\hookrightarrow F.
$$
For each finite subset $\Ff:=\{f_1,\ldots,f_r\}\subset{\mathcal S}^*(M)$ consider the semialgebraic pseudo-compactifi\-cation $(X_\Ff,{\tt j}_\Ff)$ constructed in (\ref{cita}.\ref{a}) for $\Ff$ an let $\varphi_\Ff:=\varphi\circ{\tt j}_\Ff^*:{\mathcal S}(X_\Ff)\to F$. Denote $\gtp_\Ff:=\ker\varphi_\Ff=\gtp\cap{\mathcal S}(X_\Ff)$ and ${\tt d}:=\max_\Ff\{{\tt d}_{X_\Ff}(\gtp_\Ff))\}$ where $\Ff$ runs over all the finite subsets of ${\mathcal S}^*(M)$. We claim that: \em The transcendence degree over $R$ of $F$ equals ${\tt d}$\em.

Indeed, fix a finite family $\Ff_0$ such that ${\tt d}_{X_{\Ff_0}}(\gtp_{\Ff_0})={\tt d}$, and denote $X_0:=X_{\Ff_0}$; clearly, $F_0:=\qf({\mathcal S}(X_0)/(\gtp\cap{\mathcal S}(X_0)))\subset F$. Since both are real closed fields, to prove that they are equal it is enough to show that $F$ is an algebraic extension of $F_0$, that is, $f+\gtp$ is algebraic over $F_0$ for every $f\in{\mathcal S}^*(M)\setminus\gtp$. Fix such an $f$ and consider the set $\Ff_1:=\Ff_0\cup\{f\}$ and the semialgebraic pseudo-compactification $X_1:=X_{\Ff_1}$. The projection onto all the coordinates except the last one induces a surjective semialgebraic map $\rho:X_1\to X_0$, whose restriction to $M$ is a semialgebraic homeomorphism. This map induces the $R$-monomorphism ${\mathcal S}(X)\hookrightarrow {\mathcal S}(X_1),\,h\mapsto h\circ\rho$. In this way, we have the following diagrams of ring monomorphisms
$$
\xymatrix{
{\mathcal S}(X)\ar@{^{(}->}[r]\ar@{^{(}->}[rd]&{\mathcal S}(X_1)\ar@{^{(}->}[d]\\
&{\mathcal S}^*(M)}
\qquad
\xymatrix{
{\mathcal S}(X)/(\gtp\cap{\mathcal S}(X))\ar@{^{(}->}[r]\quad \ar@{^{(}->}[rd]&{\mathcal S}(X_1)/(\gtp\cap{\mathcal S}(X_1))\ar@{^{(}->}[d]\\
&{\mathcal S}^*(M)/\gtp}
$$
and so $F_0\subset F_1:=\qf({\mathcal S}(X_1)/(\gtp\cap{\mathcal S}(X_1)))\subset F$. Thus, to see that $f+\gtp$ is algebraic over $F_0$ it is enough that $f+(\gtp\cap{\mathcal S}(X_1))$ is algebraic over $F_0$, and for that it suffices that the trancendence degree over $R$ of $F_0$ and $F_1$ coincide. Since $F_0\subset F_1$, it follows from Step 1 that
$$
\tr\deg_R(F_0)\leq\tr\deg_R(F_1)={\tt d}_{X_1}(\gtp_{{\mathfrak F}_1})\leq{\tt d}={\tt d}_{X_0}(\gtp_{{\mathfrak F}_0})=\tr\deg_\R(F_0);
$$
hence, $F=F_0$, as wanted.

\vspace{2mm}
\noindent{\bf Step 3.} \em Proof of the statement\em. Assertion (i) follows straightforwardly from Steps 1 and 2. To prove (ii) pick a brimming semialgebraic pseudo-compactification $(X,{\tt j})$ of $M$ for $\gtp$; we identify $M$ with ${\tt j}(M)$. By Step 1, 
$$
{\tt d}_X(\gtp\cap{\mathcal S}(X))=\tr\deg_R(\qf({\mathcal S}(X)/(\gtp\cap{\mathcal S}(X))))=\tr\deg_R(\qf({\mathcal S}(M)/\gtp)).
$$
Thus, it only remains to check that ${\tt d}_X(\gtp\cap{\mathcal S}(X))={\tt d}_M(\gtp)$, being clear the inequality ${\tt d}_M(\gtp)\leq{\tt d}_X(\gtp\cap{\mathcal S}(X))$. For the converse inequality, let $f\in\gtp$ be such that ${\tt d}_M(\gtp)=\dim Z_M(f)$ and pick a function $g\in{\mathcal S}(X)$ such that $\cl_X(Z_M(f))=Z_X(g)$. As $\gtp$ is a $z$-ideal, $g|_M\in\gtp$ and so $g\in\gtp\cap{\mathcal S}(X)$. Therefore,
$$
{\tt d}_M(\gtp)=\dim Z_M(f)=\dim Z_X(g)\geq{\tt d}_X(\gtp\cap{\mathcal S}(X)), 
$$
as wanted.
\end{proof}

\section{Krull dimension of rings of semialgebraic functions}\label{s4}

In this section we prove Theorems \ref{dimension} and \ref{localdimension}. Before that, we need some preliminary results.

\begin{lem}\label{subset}
Let $p\in M$ and denote by $d$ the local dimension of $M$ at $p$. Then, there is a semialgebraic embedding $h:[0,1]^d\hookrightarrow M$ such that $h(0)=p$.
\end{lem}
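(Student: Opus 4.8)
Let $p\in M$ and denote by $d$ the local dimension of $M$ at $p$. Then, there is a semialgebraic embedding $h:[0,1]^d\hookrightarrow M$ such that $h(0)=p$.

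The plan is to extract the cube directly from a triangulation of $M$ adapted to the point $p$, after two harmless reductions. First, using the semialgebraic homeomorphism $\Bb_n\to R^n$ of Section~\ref{s2} I may assume $M$ bounded; then I replace $M$ by $M\cap B$ for a small open ball $B$ centred at $p$. Since $M\cap B$ is a neighbourhood of $p$ in $M$, its local dimension at $p$ is still $d$, and for $B$ small enough $\dim(M\cap B)=d$ as well, because the local dimension is precisely the value of $\dim(M\cap B)$ for all sufficiently small $B$. A semialgebraic embedding $[0,1]^d\hookrightarrow M\cap B$ with $0\mapsto p$ is in particular one into $M$, so from now on I assume $M$ is bounded with $\dim M=\dim_pM=d$; in particular $\cl(M)$ is a closed bounded semialgebraic set of dimension $d$.

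Next I triangulate. By \cite[9.2.1]{bcr} there are a finite simplicial complex $K$ and a semialgebraic homeomorphism $\Phi\colon|K|\to\cl(M)$ compatible with the two semialgebraic subsets $M$ and $\{p\}$; compatibility with $\{p\}$ forces $\Phi^{-1}(p)$ to be a single vertex $v$, and compatibility with $M$ means $\Phi^{-1}(M)$ is a union of open simplices of $K$. Two observations. On one hand, $\dim K=\dim\cl(M)=d$ and $M$ is dense in $\cl(M)$, so the complement $|K|\setminus\Phi^{-1}(M)$, being a union of open simplices with empty interior in $|K|$, contains no $d$-dimensional open simplex; hence every $d$-dimensional open simplex of $K$ lies in $\Phi^{-1}(M)$. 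On the other hand, the local dimension of $|K|$ at $v$ equals $\dim_p\cl(M)\ge\dim_pM=d$, and in a finite complex the local dimension at a vertex is the largest dimension of a simplex having that vertex; thus some $d$-simplex $\sigma$ of $K$ has $v$ among its vertices, and by the first observation its open simplex $\sigma^{\circ}$ is contained in $\Phi^{-1}(M)$.

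Finally I carve an affine cube out of $\sigma$ at the corner $v$. Choose $x_1,\dots,x_d\in\sigma^{\circ}$ with $v,x_1,\dots,x_d$ affinely independent (possible since $\sigma^{\circ}$ is a $d$-dimensional open subset of the affine hull of $\sigma$), put $e_i:=x_i-v$, and set
$$
Q:=\Big\{\,v+\textstyle\sum_{i=1}^d s_ie_i\ :\ 0\le s_i\le\tfrac1{d+1}\ \text{for all }i\,\Big\}.
$$
Writing $v+\sum_i s_ie_i=(1-\sum_i s_i)\,v+\sum_i s_ix_i$, a convex combination because $\sum_i s_i\le\frac d{d+1}<1$, one checks that $Q\subseteq\sigma$ and $Q\setminus\{v\}\subseteq\sigma^{\circ}$: a convex combination of points of the convex set $\sigma$ that assigns positive weight to some $x_i\in\sigma^{\circ}$ lies in the relative interior $\sigma^{\circ}$. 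Since $e_1,\dots,e_d$ are linearly independent, there is an affine isomorphism $\alpha\colon[0,1]^d\to Q$ with $\alpha(0)=v$. The map $h:=\Phi\circ\alpha$ then works: it is semialgebraic; $h(0)=\Phi(v)=p$; it is an embedding because $\Phi$ is a homeomorphism and $\alpha$ an affine isomorphism onto $Q$; and $h([0,1]^d)=\Phi(Q)\subseteq M$ because $\Phi(v)=p\in M$ and $\Phi(Q\setminus\{v\})\subseteq\Phi(\sigma^{\circ})\subseteq M$.

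The substantive step is the triangulation one, more precisely producing a top-dimensional simplex incident to the prescribed vertex $v$; this is exactly where the hypothesis on the \emph{local} dimension of $M$ at $p$ is used, and the rest is routine point-set bookkeeping. (One could instead induct on $d$ using the local conic structure of $M$ at $p$, \cite[9.3.6]{bcr}, but that would require identifying the cone over a $(d-1)$-cube with $[0,1]^d$ by a homeomorphism sending the apex to a corner, which is less transparent than the affine-cube construction above.)
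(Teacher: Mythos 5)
Your proof is correct, and the overall strategy is the same as the paper's: make $M$ bounded, decompose $\cl(M)$, locate a $d$-dimensional piece of $M$ adhering to $p$, and carve out of it an affine cube with one corner mapping to $p$. The technical realization differs, though, in a way worth noting. The paper works with a semialgebraic cellular decomposition \cite[9.1.11-12]{bcr} and characterizes $\dim_pM$ as the maximal dimension of a cell $C_i\subset M$ with $p\in\cl_M(C_i)$; the preimage $q$ of $p$ under the characteristic map $g_1:[0,1]^d\to\cl(C_1)$ may then lie anywhere on the boundary of the cube, and the bulk of the paper's argument is an explicit construction (a M\"obius-type change of coordinates followed by a parallelepiped) of a set $T_1\subset\{q\}\cup(0,1)^d$ semialgebraically homeomorphic to $[0,1]^d$ with corner $q$. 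You avoid that entirely by triangulating $\cl(M)$ compatibly with both $M$ and $\{p\}$, which forces $p$ to correspond to a vertex $v$ of the complex, reducing the carving step to the elementary affine cube in a simplex corner. Your preliminary shrinking to $M\cap B$ so that $\dim M=\dim_pM$ is what lets you argue that every open $d$-simplex lies in $\Phi^{-1}(M)$ (by density plus the fact that a maximal open simplex is open in $|K|$), and replaces the paper's direct use of the cellular characterization of local dimension; both routes are sound, and yours is arguably cleaner.
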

\begin{proof}
We assume that $M$ is bounded and so its closure $X:=\cl(M)$ is bounded and closed. Let ${\mathcal C}:=\{C_1,\ldots,C_s\}$ be a semialgebraic cellular decomposition of $X$ such that $M$ and the closure of each cell are (finite) unions of cells, see \cite[9.1.11-12]{bcr}. Denote by $g_i:[0,1]^{\dim C_i}\to X$ the characteristic map of the cell $C_i$ and recall that the restriction $g_i|_{(0,1)^{\dim C_i}}:(0,1)^{\dim C_i}\to C_i$ is a semialgebraic homeomorphism onto $C_i$. Clearly, 
$$
d:=\dim_pM=\max\{\dim C_i:\, p\in\cl_{M}(C_i),\ C_i\subset M\}.
$$
Without loss of generality, suppose that $p\in\cl_{M}(C_1)$, $C_1\subset M$ and $\dim C_1=d$. Of course, $\cl_{M}(C_1)\subset g_1([0,1]^d)$ and we pick a point $q\in[0,1]^d$ such that $g_1(q)=p$. Let us construct a semialgebraic set $T_1\subset(0,1)^d\cup\{q\}$ and a semialgebraic homeomorphism $h_1:[0,1]^d\to T_1$ mapping the origin to $q$. 

Indeed, after reordering the variables and applying changes of coordinates of the type $x_i\mapsto 1-x_i$, if necessary, we may assume that $q=(0,\ldots,0,q_{r+1},\ldots,q_d)$ where $0<q_j<1$. Consider the semialgebraic set
$$
S:=\{q\}\cup\big((0,1)^r\times(q_{r+1},1)\times\cdots\times(q_d,1)\big)\subset\{q\}\cup(0,1)^d,
$$
and the semialgebraic homeomorphism 
$$
f:=(f_1,\ldots,f_d):Q:=\{0\}\cup\{x_1>0,\ldots,x_d>0\}\subset R^d\to S,
$$ 
where $f_i(x)=q_i+\frac{(1-q_i)x_i}{1+x_i}$ for $i=1,\ldots,d$. Observe that $f(0)=q$ and consider the $d$-dimensional bounded and closed affine parallelepiped $T_0:=\{\sum_{i=1}^d\lambda_i{\tt p}_i:\, 0\leq\lambda_i\leq1\}\subset Q$ generated by the origin and the points ${\tt p}_i:=(1/3,\ldots,1/3,1,1/3,\ldots,1/3)$ all whose coordinates are $1/3$ except for the $i$-th which equals $1$, for $i=1,\ldots,d$. It is clear that $T_0\subset Q$ and the map $f|_{T_0}:T_0\to f(T_0)=:T_1$ is a semialgebraic homeomorphism. Of course, $[0,1]^d$ and $T_0$ are semialgebraically homeomorphic via the semialgebraic homeomorphism $h_0:[0,1]^d\to T_0,\ \lambda:=(\lambda_1,\ldots,\lambda_d)\mapsto\sum_{i=1}^d\lambda_i{\tt p}_i$, which satisfies $h_0(0)=0$. Henceforth, the semialgebraic set $T_1\subset\{q\}\cup(0,1)^d$ and the map $h_1:=(f|_{T_0})\circ h_0:[0,1]^d\to T_1$ are the ones we are looking for. 

Finally, choosing $h:=g_1\circ h_1:[0,1]^d\hookrightarrow M$, we are done.
\end{proof}

The following example provide the clue to construct chains of prime ideals of maximal lenght.

\begin{example}\label{excrucial}
(i) Let $X:=[0,1]^n$ and define $\gtp$ as the set of all semialgebraic functions $f\in{\mathcal S}(X)$ such that for each semialgebraic triangulation $(K,\Phi)$ of $X$ compatible with $Z(f)$ there is an $n$-dimensional simplex $\sigma\in K$ such that $\Phi(\sigma)\subset Z(f)$ and for each $d=0,\ldots,n$ there is a $d$-dimensional face $\tau_d$ of $\sigma$ so that $\Phi(\tau_d)\subset\{\x_{d+1}=0,\ldots,\x_n=0\}$. We will call $\sigma$ an \em indicator simplex for $f$\em. We claim that: \em $\gtp$ is a prime ideal of ${\mathcal S}(X)$ and ${\tt d}_X(\gtp)=n$\em.

Indeed, as $\gtp\subset\gtm_0$, it is a proper subset of ${\mathcal S}(X)$. It is clear that the set $\gtp$ is closed under multiplication by elements of ${\mathcal S}(X)$ and let us see that it is closed under addition. Indeed, let $f_1,f_2\in\gtp$ and let $(K,\Phi)$ be a semialgebraic triangulation of $X$ compatible with $Z(f_1)$ and $Z(f_2)$. Let $\sigma_i$ be an indicator simplex of $f_i$. For our purposes it is enough to check that $\sigma_1=\sigma_2$. But this follows recursively using the following straightforward fact: 

\vspace{2mm}\setcounter{substep}{0}
\begin{substeps}{excrucial}\label{propcrucial}
\em Let $\tau\subset R^d$ be simplex of dimension $d$ and let $\eta_1,\eta_2$ be two simplices contained in $R^d\times[0,\infty)$ that have $\tau$ as a common face. Then $\eta_1^0\cap\eta_2^0\neq\varnothing$.
\end{substeps} 

\vspace{2mm}
Next, we prove that the ideal $\gtp$ is prime, being clear that ${\tt d}_X(\gtp)=n$. Indeed, let $f_1,f_2\in{\mathcal S}(X)$ be such that $f_1f_2\in\gtp$ and let $(K,\Phi)$ be a semialgebraic triangulation of $X$ compatible with $Z(f_1)$ and $Z(f_2)$. Let $\sigma$ be an indicator simplex of $f_1f_2$. Using (\ref{excrucial}.\ref{propcrucial}) recursively, one shows that the condition \em for each $d=0,\ldots,n$ there is a $d$-dimensional face $\tau_d$ of $\sigma$ so that $\Phi(\tau_d)\subset\{\x_{d+1}=0,\ldots,\x_n=0\}$ \em determines uniquely $\sigma$. Since $\Phi(\sigma)\subset Z(f_1f_2)$ and $(K,\Phi)$ is compatible with $Z(f_i)$, we may assume that $\Phi(\sigma^0)\subset Z(f_1)$; hence, $\Phi(\sigma)\subset Z(f_1)$ and so $f_1\in\gtp$. We conclude that $\gtp$ is a prime ideal, as required.

\vspace{2mm}
(ii) Write $X_n:=[0,1]^n$. We claim that: \em There is a chain of prime ideals $\gtq_0\subsetneq\cdots\subsetneq\gtq_n:=\gtm_0$ in ${\mathcal S}(X_n)$ such that ${\tt d}_X(\gtq_k)=n-k$ for $k=0,\ldots,n$\em.

For each $k=1,\ldots,n$ define $X_k:=[0,1]^k\times\{0\}\subset R^n$. Clearly, $\{0\}\subsetneq X_1\subsetneq\cdots\subsetneq X_n$ is a chain of closed subsets of $X_n$. The restriction homomorphism $\varphi_k:{\mathcal S}(X_n)\to{\mathcal S}(X_k),\ f\mapsto f|_{X_k}$ is, by \cite{dk}, surjective and so the ideal $\gtp_k$ constructed in (i) for $X_k$ provides a prime ideal $\gtq_{n-k}:=\varphi_k^{-1}(\gtp_k)$ such that ${\tt d}_{X}(\gtq_{n-k})={\tt d}_{X_k}(\gtp_k)=k$. Now, by the very definition of the ideals $\gtq_k$, it is clear that $\gtq_0\subsetneq\cdots\subsetneq\gtq_n:=\gtm_0$.
\end{example}

\subsection{Proof of the main results}
We are ready to approach the proofs of Theorems \ref{dimension} and \ref{localdimension}.

\begin{proof}[Proof of Theorem \em\ref{dimension}]
Combining (\ref{cita}.\ref{inequalities}) and Corollary \ref{ht} it holds that 
\begin{equation*}
\dim{\mathcal S}(M)\leq\dim{\mathcal S}^*(M)\leq\sup_{(X,{\tt j})\in{\mathfrak F}_M}\{\dim{\mathcal S}(X)\}\leq\dim M.
\end{equation*}
To finish we must show that $\dim M\leq\dim{\mathcal S}(M)$, but this follows from Theorem \ref{localdimension}, that we prove next.
\end{proof}

\begin{proof}[Proof of Theorem \em\ref{localdimension}]
First, by Lemma \ref{subset} there is a semialgebraic embedding $h:[0,1]^d\hookrightarrow M$ such that $h(0)=p$; denote $T:=\im(h)$. By Example \ref{excrucial}, there is a chain of prime ideals $\gtq_0\subsetneq\cdots\subsetneq\gtq_d:=\gtnd_p$ in ${\mathcal S}(T)$. The surjective homomorphism $\varphi:{\mathcal S}^{\diam}(M)\to{\mathcal S}(T),\ f \mapsto f|_T$ (see \cite{dk}) provides a chain of prime ideals $\gtp_0\subsetneq\cdots\subsetneq\gtp_d:=\gtmd_p$ in ${\mathcal S}^{\diam}(M)$ where each $\gtp_k:=\varphi^{-1}(\gtq_k)$; hence, $d\leq\hgt(\gtmd_p)$.

For the converse inequality $\hgt(\gtmd_p)\leq d$, let $\gtp_0'\subsetneq\cdots\subsetneq\gtp_\ell'=\gtmd_p$ be a chain of prime ideals in ${\mathcal S}(M)$. By (\ref{cita}.\ref{longitud}), there exist a semialgebraic compactification $(X,{\tt j})$ of $M$ and a chain of prime ideals $\gtP_0'\subsetneq\cdots\subsetneq\gtP_{\ell}'$ of ${\mathcal S}(X)$ with $\gtP_i':=\gtp_i'\cap{\mathcal S}(X)$. After identifying $M$ with ${\tt j}(M)$, we write $X=\cl(M)$. Observe that $\gtP_\ell'=\gtmd_p\cap{\mathcal S}(X)$ is the maximal ideal consisting of those semialgebraic functions of ${\mathcal S}(X)$ vanishing at $p$. Thus, by Lemma \ref{ht0}, we have $0={\tt d}_X(\gtP_\ell')<\cdots<{\tt d}_X(\gtP_0')$; hence, $\ell\leq{\tt d}_X(\gtP_0')$. Let $r>0$ be small enough so that the open ball $\Bb$ of center $p$ and radius $r$ satisfies $\dim(X\cap\cl(\Bb))=\dim(M\cap \Bb)=d$. Consider the closed semialgebraic sets defined as $T_1:=\cl(\Bb)$ and $T_2:= R^n\setminus \Bb$. Let $f_1,f_2\in{\mathcal S}^*(R^n)$ be such that $Z_{R^n}(f_i)=T_i$. The product $f_1f_2$ vanishes identically on $R^n$, hence on $X$. Thus, $(f_1|_X)(f_2|_X)\in\gtP_0'$ and since $f_2(p)\neq0$, we conclude that $f_1|_X\in\gtP_0'$; hence
$$
\ell\leq{\tt d}_X(\gtP_0')\leq\dim(Z_X(f_1|_X))=\dim(X\cap\cl(\Bb))=d.
$$
Therefore, $\hgt(\gtmd_p)\leq d$ as wanted.

To finish notice that, in case ${\mathcal S}^{\diam}(M)={\mathcal S}(M)$, the prime ideals $\gtp_k$ constructed above are $z$-ideals, as they are the inverse images of the prime $z$-ideals $\gtq_k$ by the surjective homomorphism ${\mathcal S}(M)\to{\mathcal S}(T),\ f \mapsto f|_T$. Moreover, it follows from Theorem \ref{rmdc} and Example \ref{excrucial} that
$$
\tr\deg_R(\qf({\mathcal S}^{\diam}(M)/\gtp_k))=\tr\deg_R(\qf({\mathcal S}(T)/\gtq_k))={\tt d}_T(\gtq_k)=d-k,
$$ 
as required.
\end{proof}

\bibliographystyle{amsalpha}

\end{document}